\DeclareMathOperator{\im}{im}
\DeclareMathOperator{\cost}{Cost}
\DeclareMathOperator{\rg}{rg}
\newtheorem{lem}{Lemma}[section]
\newtheorem{thm}[lem]{Theorem}
\newtheorem{cor}[lem]{Corollary} 
\theoremstyle{definition}
\newtheorem{defi}[lem]{Definition}
\newtheorem{question}[lem]{Question}
\newtheorem{rem}[lem]{Remark} 
\newcommand{\N}{\ensuremath {\mathbb{N}}}
\newcommand{\Q} {\ensuremath {\mathbb{Q}}}
\newcommand{\Z} {\ensuremath {\mathbb{Z}}}
 \newcommand\norm{\bBigg@{0.8}}
 \newcommand{\inparens}[2][flex]{\csname #1l\endcsname(#2%
                                 \csname #1r\endcsname)\mathclose{}}
 \newcommand{\inangles}[2][flex]{\csname #1l\endcsname\langle#2%
                                 \csname #1r\endcsname\rangle\mathclose{}} 
 \newcommand{\innorm}[2][flex]{\csname #1l\endcsname|#2%
                                 \csname #1r\endcsname|\mathclose{}}
 \newcommand{\indnorm}[2][flex]{\csname #1l\endcsname\|#2%
                                 \csname #1r\endcsname\|\mathclose{}}
 \newcommand{\indnorml}[4][flex]{\csname #1l\endcsname\|#2%
                                 \csname #1r\endcsname\|_{#3}^{#4}\mathclose{}}
\newcommand{\sv}[2][flex]{\indnorm[#1]{#2}}%
\newcommand{\isv}[2][norm]{\indnorml[#1]{#2}{\Z}{}}
\newcommand{\isvc}[3][norm]{\indnorml[#1]{#2}{\Z}{#3}}
\newcommand{\pfcl}[2][flex]{\csname #1l\endcsname[#2%
                            \csname #1r\endcsname]}
\newcommand{\ifsv}[2][norm]{\csname #1l\endcsname\bracevert\!#2\!%
                            \csname #1r\endcsname\bracevert}
\newcommand{\stisv}[2][flex]{\indnorml[#1]{#2}{\Z}{\infty}}
\newcommand{\ucov}[1]{%
  \widetilde{#1}}
\title[Rank gradient vs.\ stable integral simplicial volume]%
      {Rank gradient\\ vs.\\ stable integral simplicial volume}
\author{Clara L\"oh}
\subjclass[2010]{57R19, 20E18, 20F65}
\keywords{stable integral simplicial volume, rank gradient}
\def\draftinfo{}
\date{\today.\ 
    This work was supported by the CRC~1085 \emph{Higher Invariants} 
    (Universit\"at Regensburg, funded by the DFG)\draftinfo}
\begin{document}

\begin{abstract}
  We observe that stable integral simplicial volume of closed manifolds
  gives an upper bound for the rank gradient of the corresponding
  fundamental groups.
\end{abstract}
\maketitle

\section{Introduction}

The \emph{residually finite view} on groups or spaces aims at understanding
groups and spaces through gradient invariants: If $I$ is an invariant of
groups, then we define the associated gradient invariant~$\widehat I$
for groups~$\Gamma$ by
\[ \widehat I(\Gamma) := \inf_{H \in F(\Gamma)} \frac{I(H)}{[\Gamma : H]},
\]
where $F(\Gamma)$ denotes the set of all finite index subgroups
of~$\Gamma$. For example, the rank gradient is the gradient invariant
associated with the minimal number of generators of groups
(Section~\ref{sec:rg}), originally introduced by
Lackenby~\cite{lackenby}. Further well-studied examples are the Betti number
gradient and the logarithmic torsion homology gradient.

Stable integral simplicial volume is the gradient invariant associated
with integral simplicial volume (Section~\ref{sec:stisv}). It is known
that stable integral simplicial volume yields upper bounds for Betti
number gradients and logarithmic torsion homology gradients~\cite[Theorem~1.6,
  Theorem~2.6]{FLPS}. 

In this note, we observe that stable integral simplcial volume also
gives an upper bound for the rank gradient of the corresponding
fundamental groups:

\begin{thm}\label{mainthm}
  Let $M$ be an oriented closed connected manifold with fundamental
  group~$\Gamma$ and let $\Gamma_* = (\Gamma_k)_{k \in \N}$ be a chain of 
  finite index subgroups of~$\Gamma$. Then
  \[ \rg (\Gamma, \Gamma_*)
  \leq 
  \isvc M{\Gamma_*}.
  \]
  In particular, 
  $ \rg \Gamma \leq 
  \stisv M.
  $
\end{thm}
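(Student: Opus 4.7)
The plan is to reduce the statement to a deterministic per-cover estimate and then prove that by geometrizing an integral fundamental cycle. Writing $\overline{M}_k\to M$ for the finite cover corresponding to $\Gamma_k\le\Gamma$, unfolding the definitions gives
\[ \isvc{M}{\Gamma_*}\;=\;\inf_k\frac{\isv{\overline{M}_k}}{[\Gamma:\Gamma_k]}
\qquad\text{and}\qquad
\rg(\Gamma,\Gamma_*)\;=\;\inf_k\frac{d(\Gamma_k)}{[\Gamma:\Gamma_k]}, \]
so the main inequality will follow from the deterministic bound
\[ d(H)\;\le\;\isv{N} \]
applied to every oriented closed connected manifold~$N$ with fundamental group~$H$. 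The ``in particular'' clause then drops out by infimizing over chains witnessing $\stisv M$.

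To prove the deterministic bound, I would promote an almost-optimal integral fundamental cycle of~$N$ to a $\pi_1$-surjective map from a simplicial pseudo-manifold with few top cells. Fix an integral fundamental cycle $z=\sum_{i=1}^m\epsilon_i\sigma_i$ with $\epsilon_i\in\{\pm1\}$ and $m=\|z\|_1$, splitting multi-simplices if necessary. Because $\partial z=0$ in $C_{n-1}(N;\Z)$, the codimension-one faces of the $\sigma_i$ cancel in matched pairs, and I would build a pseudo-manifold~$X$ by taking $m$ disjoint copies of~$\Delta^n$ and gluing their boundary faces along some such matching. Assembling the~$\sigma_i$ then yields a continuous map $f\colon X\to N$ with $f_*[X]=[N]$ in $H_n(\,\cdot\,;\Z)$.

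To check that $f_*\colon\pi_1(X)\to\pi_1(N)$ is surjective, suppose to the contrary that $\im(f_*)=H'\subsetneq H$. Then $f$ lifts through the nontrivial cover $p\colon N'\to N$ classified by~$H'$, so $f_*[X]=p_*\widetilde{f}_*[X]$ must lie in $p_*H_n(N';\Z)$. This image is either $0$ (if $p$ has infinite degree, so the noncompact $N'$ carries no fundamental class) or a proper subgroup of $\Z\cdot[N]$ (if $p$ has finite degree~$\ge2$), contradicting $f_*[X]=[N]$. Hence $d(H)\le d(\pi_1(X))$.

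The last and most delicate step is to bound $d(\pi_1(X))$ by~$m$. A spanning-tree argument in the dual $1$-skeleton of~$X$ — which has $m$ dual $0$-cells (one per $n$-simplex) and at most $(n+1)m/2$ dual $1$-cells (one per matched face-pair) — already yields $d(\pi_1(X))\le(n-1)m/2+1$, proving a weaker version of the inequality with a dimension-dependent constant. Closing the gap to coefficient~$1$ in all dimensions will require exploiting the dual $2$-cells (from codimension-two faces of~$X$) to kill redundant generators, or routing the argument through a handle decomposition adapted to the fundamental cycle; the specific origin of~$X$ from a fundamental cycle, rather than from a generic pseudo-manifold, must be used in an essential way. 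This is where I expect the main obstacle to lie.
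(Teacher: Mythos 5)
Your reduction to the single-cover estimate $d(\pi_1(N)) \le \isv N$ is the right move (the harmless replacement of $d(\Gamma_k)-1$ by $d(\Gamma_k)$ notwithstanding), and your surjectivity argument---lifting $f$ through the cover classified by $\im f_*$ and ruling out both the infinite-degree and the degree $\ge 2$ cases by looking at $H_n$---is exactly the one the paper uses. But the proposal does not prove the estimate with coefficient~$1$: your spanning-tree count in the dual graph only gives $d(\pi_1(X)) \le (n-1)m/2 + 1$, you yourself flag that closing the gap to~$m$ is the real difficulty, and the routes you suggest (dual $2$-cells, handle decompositions) are not carried out. What you have is therefore a proof of a weaker inequality with a dimension-dependent constant, comparable to the paper's own ``geometric proof'' (Lemma~\ref{lem}), which obtains the constant~$n$ by a similar cell-counting in a glued-up model space $X_c$ --- not a proof of the stated theorem.

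The paper's improved argument (Lemma~\ref{lem:improved}) sidesteps the auxiliary space entirely and never bounds $d(\pi_1(X))$. Instead it reads off a generating set of $\Gamma$ of size $m$ directly from the cycle: lift a reduced fundamental cycle $c = \sum_{j=1}^m a_j \cdot \sigma_j$ to the universal cover so that the $0$-th vertex of each lift $\widetilde\sigma_j$ lies in a chosen set-theoretic fundamental domain~$D$, and let $g_j \in \Gamma$ be the deck transformation carrying the first vertex of $\widetilde\sigma_j$ back to~$D$. The subgroup $H = \langle g_1,\dots,g_m\rangle$ satisfies $d(H) \le m$ by construction; the content is that $H = \Gamma$, which follows by pushing $\widetilde c$ down to $\ucov M / H$, checking that the image is still a cycle there (a short computation in $\Z \otimes_{\Z H} C_*(\ucov M;\Z)$ using $g_j \in H$), and then applying your own degree argument to the covering $\ucov M / H \longrightarrow M$. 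So the generators come one per simplex of the cycle, not from a presentation of $\pi_1(X)$. Without this (or an equivalent) idea, your write-up has a genuine gap at exactly the step you identified as the main obstacle.
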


The result even holds without any asymptotics (Lemma~\ref{lem:improved}),
but the gradient invariants seem to be the relevant invariants.

In particular, vanishing results for stable integral simplicial volume
imply corresponding vanishing results for the rank gradient. For
example, we obtain an alternative argument for the following rather special
case of a result by Lackenby~\cite[Theorem~1.2]{lackenby}:

\begin{cor}
  Let $\Gamma$ be a residually finite infinite amenable group that
  admits an oriented closed connected manifold as model of the
  classifying space~$K(\Gamma,1)$. Then 
  $\rg \Gamma = 0.
  $
\end{cor}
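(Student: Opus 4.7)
The plan is to combine the main theorem with the known vanishing result for stable integral simplicial volume of aspherical manifolds with amenable fundamental group. Since the hypothesis provides a manifold model~$M$ of~$K(\Gamma,1)$, the manifold~$M$ is aspherical with $\pi_1(M) \cong \Gamma$.

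First, I would apply Theorem~\ref{mainthm} to~$M$, yielding the inequality
\[
  \rg \Gamma \leq \stisv M.
\]
Second, I would invoke the vanishing theorem of Frigerio--L\"oh--Pagliantini--Sauer~\cite{FLPS}: for an oriented closed connected aspherical manifold whose fundamental group is residually finite, infinite, and amenable, the stable integral simplicial volume vanishes. Applying this to~$M$ gives $\stisv M = 0$. Combining the two steps and using $\rg \Gamma \geq 0$ gives $\rg \Gamma = 0$.

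The only nontrivial input is the vanishing result for~$\stisv M$; once that is cited, the argument is immediate from the main theorem. This is the step I expect to be the main obstacle in the sense that it encapsulates the entire depth of the corollary, whereas the translation from simplicial volume to rank gradient is precisely the content already provided by Theorem~\ref{mainthm}. No additional combinatorial or geometric arguments seem necessary.
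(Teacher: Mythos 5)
Your proposal is correct and follows exactly the paper's own argument: cite the vanishing of stable integral simplicial volume for aspherical closed manifolds with residually finite infinite amenable fundamental group (Theorem~1.10 of~\cite{FLPS}) and combine it with Theorem~\ref{mainthm}. Nothing further is needed.
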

\begin{proof}
  Let $M$ be such a model of~$K(\Gamma,1)$. Then $\stisv M = 0$
  \cite[Theorem~1.10]{FLPS}. Hence, Theorem~\ref{mainthm} gives~$\rg
  \Gamma =0$.
\end{proof}

However, the bound in Theorem~\ref{mainthm} in general is far from
being sharp. For instance, if $\Gamma$ is the fundamental group of an
oriented closed connected surface~$M$ of genus~$g \in \N_{\geq 1}$,
then it is well known that $\rg \Gamma = b_1^{(2)} = 2 \cdot g
-2$~\cite{abertnikolov}\cite[Proposition~VI.9]{gaboriau}, but $\stisv
M = \sv M = 4 \cdot g - 4$~\cite{vbc,loehpagliantini}.  In more
positive words, Theorem~\ref{mainthm} shows that stable integral
simplicial volume is a geometric refinement of the rank gradient.

In contrast to the residual point of view, the \emph{dynamical view} on
groups or spaces aims at understanding groups and spaces through
actions on probability spaces. In the residually finite case, the
profinite completion provides a canonical link between the residually finite 
and the dynamical view.

For example, the $\Q$-Betti number gradients coincide with the
$L^2$-Betti numbers~\cite{lueckapprox}, which in turn admit a
dynamical description~\cite{gaboriaul2}. The dynamical sibling of
the rank gradient is cost~\cite{gaboriau,kechrismiller}, and the
cost (minus~$1$) of the profinite completion coincides with the rank
gradient~\cite{abertnikolov}. However, it remains an open problem to
decide whether the rank gradient and cost coincide for all residually
finite finitely generated groups.

The classical version of stable integral simplicial volume is Gromov's
simplicial volume; the dynamical version is integral foliated
simplicial volume~\cite{gromovmetric,mschmidt}, and integral foliated
simplicial volume with respect to the profinite completion coincides
with stable integral simplicial volume~\cite[Theorem~2.6]{FLPS}. Therefore, it is
natural to consider the following problem:

\begin{question}
  Let $M$ be an oriented closed connected aspherical manifold with
  (integral foliated) simplicial volume equal to~$0$.  Does this imply
  that $\cost \pi_1(M) = 1$\;? More generally, does (integral foliated)
  simplicial volume of~$M$ give a linear upper bound for~$\cost(\pi_1(M)) - 1$\;?
\end{question}

It should be noted that this is conjecturally trivial (but no direct
route is known): The Singer conjecture predicts that all $L^2$-Betti
numbers are~$0$, except possibly in the middle dimension. Moreover,
conjecturally, if the first $L^2$-Betti number is~$0$, the cost
equals~$1$. Both conjectures seem bold, but no counterexamples are
known. 

\subsection*{Organisation of this article}

We briefly review the rank gradient
(Section~\ref{sec:rg}) and stable integral simplicial volume
(Section~\ref{sec:stisv}). The (elementary) proof of Theorem~\ref{mainthm}
is given in Section~\ref{sec:proofmain}.

\section{Rank gradient}\label{sec:rg}

The rank gradient of a group is the gradient invariant associated with 
the rank: For a finitely generated group~$\Gamma$, we denote the rank
of~$\Gamma$, i.e., the minimal size of a generating set of~$\Gamma$,
by~$d(\Gamma)$.

\begin{defi}[rank gradient~\cite{lackenby}]
  Let $\Gamma$ be a finitely generated group and let $\Gamma_* =
  (\Gamma_k)_{k \in \N}$ be a (descending) chain of finite index subgroups
  of~$\Gamma$. Then the \emph{rank gradient of~$\Gamma$ with respect
    to~$\Gamma_*$} is defined as
  \[ \rg (\Gamma,\Gamma_*) 
     := \inf_{k \in \N} 
       \frac{d(\Gamma_k) - 1}{[\Gamma : \Gamma_k]}. 
  \]
  Moreover, the \emph{absolute rank gradient of~$\Gamma$} is defined
  as
  \[ \rg \Gamma := \inf_{H \in F(\Gamma)} \frac{d(H) -1}{[\Gamma : H]},\]
  where $F(\Gamma)$ denotes the set of all finite index subgroups of~$\Gamma$.
\end{defi}

If $\Gamma$ is a finitely generated group and $(\Gamma_k)_{k \in \N}$
is a chain of finite index subgroups, then the sequence
\[ \Bigl(\frac{d(\Gamma_k) - 1}{[\Gamma:\Gamma_k]} \Bigr)_{k \in \N}
\]
is non-increasing; therefore,
\[ \rg(\Gamma,\Gamma_*)
   = \inf_{k \in \N} \frac{d(\Gamma_k) - 1}{[\Gamma : \Gamma_k]}
   = \lim_{k \rightarrow \infty} \frac{d(\Gamma_k) -1}{[\Gamma : \Gamma_k]}.
\]

The rank gradient was originally introduced by
Lackenby~\cite{lackenby}. It is known that finitely generated
residually finite (infinite) amenable groups have trivial rank
gradient~\cite{lackenby,abertnikolov} and, more generally, that
residually finite groups that contain an infinite amenable normal
subgroup have trivial rank gradient~\cite{abertnikolov}. In contrast,
free groups of rank~$r \in \N_{\geq 1}$ have rank gradient~$r -
1$~\cite{lackenby} and fundamental groups of oriented closed connected
surfaces of genus~$g \in \N_{\geq 1}$ have rank gradient~$2 \cdot g -
2$~\cite{abertnikolov}\cite[Proposition~VI.9]{gaboriau}. Moreover,
some inheritance results are known for free products~\cite{abertjaikinzapirainnikolov},
certain free products with amalgamation, 
and certain HNN extensions~\cite{pappas}.
Further classes of groups with known rank gradient are certain Artin
groups and their relatives~\cite{karnikolovartin} and generalised
Thompson groups~\cite{kochloukova}.

In all known cases, the rank gradient coincides with the first
$L^2$-Betti number and is independent of the chosen chain of (normal) subgroups
(with trivial intersection), but it remains an open problem whether
this is always the case.

\section{Stable integral simplicial volume}\label{sec:stisv}

Similarly, stable integral simplicial volume is the gradient invariant 
associated with integral simplicial volume: The \emph{integral
  simplicial volume} of an oriented closed connected $n$-manifold~$N$ 
is defined by
\[ \isv N := \min 
   \biggl\{
   \sum_{j=1}^m |a_j|
   \biggm| \sum_{j=1}^m a_j \cdot \sigma_j \in C_n(N;\Z) 
           \text{ is a fundamental cycle of~$N$}
   \biggr\}.
\]

\begin{defi}[stable integral simplicial volume]
  Let $M$ be an oriented closed connected $n$-manifold with fundamental 
  group~$\Gamma$, let $\Gamma_* = (\Gamma_k)_{k \in \N}$ be a chain of 
  finite index subgroups of~$\Gamma$, and for~$k \in \N$ let~$M_k = \ucov M / \Gamma_k$ 
  be the covering manifold of~$M$ associated with the subgroup~$\Gamma_k$. 
  Then the \emph{stable integral simplicial volume of~$M$ with respect 
    to~$\Gamma_*$} is defined as
  \[ \isvc M {\Gamma_*} := \inf_{k \in \N} 
     \frac{\isv{M_k}}{[\Gamma : \Gamma_k]}.
  \]
  The \emph{stable integral simplicial volume of~$M$} is defined as
  \[ \stisv M := \inf_{H \in F(\Gamma)} \frac{\isv {\ucov M / H}}{[\Gamma :H]}.
  \]
\end{defi}

In the situation of the definition, the sequence
\[ \Bigl( \frac{\isv{M_k}}{[\Gamma : \Gamma_k]} \Bigr)_{k \in \N}
\]
is non-increasing; therefore,
\[ \isvc M {\Gamma_*} = \inf_{k \in \N} \frac{\isv{M_k}}{[\Gamma : \Gamma_k]}
   =\lim_{k \rightarrow \infty} 
     \frac{\isv{M_k}}{[\Gamma : \Gamma_k]}.
\]
As in the case for the rank gradient, it is unknown whether
stable integral simplicial volume is independent of the chosen
chain of subgroups (with trivial intersection) or not.

For aspherical oriented closed connected surfaces, for closed
hyperbolic \mbox{$3$-man}\-i\-folds, for closed Seifert manifolds with
infinite fundamental group, and for aspherical closed manifolds with
residually finite amenable fundamental group the stable integral
simplicial volume coincides with ordinary simplicial
volume~\cite{FLPS}. However, for closed hyperbolic manifolds of
dimension at least~$4$, stable integral simplicial volume is uniformly
bigger than ordinary simplicial volume~\cite{FFM}.

\section{Proof of Theorem~\ref{mainthm}}\label{sec:proofmain}

We will first give a simple geometric proof of Theorem~\ref{mainthm}
with a worse multiplicative constant
(Subsection~\ref{subsec:geomproof}) and then we will give a more
algebraic proof with the improved constant
(Subsection~\ref{subsec:algproof}). 

\subsection{A geometric proof}\label{subsec:geomproof}

We will prove the following version of Theorem~\ref{mainthm} (with a
slightly worse multiplicative constant): If $M$ is an oriented closed
connected $n$-manifold (with $n>0$) and if $(\Gamma_k)_{k \in \N}$ is
a chain of finite index subgroups of~$\Gamma$, then
\[ \rg(\Gamma,\Gamma_*) \leq n \cdot \isvc M {\Gamma_*}.
\]
This result follows from the following observation, by passing 
to finite coverings.

\begin{lem}[rank vs.\ integral simplicial volume]\label{lem}
  Let $M$ be an oriented closed connected $n$-manifold with
  fundamental group~$\Gamma$ (and~$n > 0$). Then
  \[ d(\Gamma) \leq n \cdot \isv M. 
  \]
\end{lem}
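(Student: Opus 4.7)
The plan is to realise an integral fundamental cycle of $M$ as an abstract pseudomanifold $X$ mapping to $M$ with degree $1$, and then to bound the rank of $\pi_1(X)$ by using the $2$-cells that live inside each simplex to eliminate most of the edge generators.

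First I would pick an integral fundamental cycle $c = \sum_{j=1}^{m} a_j \sigma_j$ realising $\isv M = N$ and, after replacing each $\sigma_j$ by $|a_j|$ signed copies, reduce to the case $a_j \in \{+1,-1\}$ and $m = N$. I would then glue $N$ copies of $\Delta^n$ (one per $\sigma_j$) along their $(n-1)$-faces in the pairs cancelled by $\partial c = 0$, obtaining a $\Delta$-complex $X$ together with a continuous map $f \colon X \to M$, assembled from the $\sigma_j$, satisfying $f_*[X] = [M]$. If $X$ is disconnected, I would connect its components by attaching auxiliary arcs (mapped into $M$ arbitrarily) to produce a connected $X'$ still satisfying $f_*[X'] = [M]$.

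A standard covering-space argument then shows that $f_* \colon \pi_1(X') \to \Gamma$ is surjective: if the image $H$ were a proper subgroup, then $f$ would lift to the covering $\ucov M / H \to M$, and $H_n$ of that cover would force $f_*[X']$ to be either $0$ (if $[\Gamma:H] = \infty$, since a non-compact connected $n$-manifold has trivial top integral homology) or a multiple of $[\Gamma:H] \cdot [M]$ with $[\Gamma:H] > 1$ (in the finite case), both contradicting $f_*[X'] = [M]$.

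The core step is the bound $d(\pi_1(X')) \leq n \cdot N$. In each copy of $\Delta^n$ sitting inside $X$, I would call the $n$ edges $[e_0, e_i]$ with $i \in \{1,\dots,n\}$ the \emph{spine edges}. For each pair $1 \leq i < k \leq n$, the $2$-face $[e_0, e_i, e_k]$ is a $2$-cell in $X'$ whose boundary null-homotopes the loop $[e_0,e_i] \cdot [e_i,e_k] \cdot [e_k,e_0]$, so the non-spine edge $[e_i,e_k]$ equals a word in the spine edges $[e_0,e_i]$ and $[e_0,e_k]$ in $\pi_1(X')$. Using these relations as Tietze transformations on the edge-presentation of $\pi_1(X')$ coming from the $\Delta$-complex structure, every non-spine edge can be eliminated, so $\pi_1(X')$ is generated by (images of) spine edges, of which there are at most $n \cdot N$. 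Combined with the surjection this yields $d(\Gamma) \leq d(\pi_1(X')) \leq n \cdot \isv M$.

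The hardest part is the spine-edges reduction: it requires careful bookkeeping of identifications among edges belonging to distinct simplices of $X$ and of how Tietze elimination interacts with a chosen spanning tree of the $1$-skeleton. Ensuring connectedness of $X'$ for the degree-$1$ argument is a secondary subtlety, handled by the arc attachments.
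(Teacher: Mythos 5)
Your proposal is correct and follows essentially the same route as the paper's geometric proof: build a connected complex from the simplices of a fundamental cycle, show $\pi_1$ of that complex surjects onto $\Gamma$ via the degree/covering argument, and bound its rank by $n\cdot m$. Your spine-edge elimination via the $2$-faces $[e_0,e_i,e_k]$ is precisely the detail behind the paper's terse claim that the $1$- and $2$-cells give $d(\pi_1(X_c))\leq n\cdot m$, and your minor variations (reducing to coefficients $\pm1$, joining components by arcs rather than glued vertices) are immaterial.
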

\begin{proof}
  Let $c = \sum_{j=1}^m a_j \cdot \sigma_j \in C_n(M;\Z)$ be an
  integral fundamental cycle of~$M$ in reduced form; without loss of
  generality, we may assume that all vertices of~$\sigma_1, \dots,
  \sigma_m$ are mapped to the basepoint of~$M$~\cite[Chapter~9.5]{tomdieck}.
  Out of the
  combinatorics of~$c$ we can then construct a connected
  CW-complex~$X_c$, a homology class~$\alpha_c \in H_n(X_c;\Z)$ and a
  continuous map~$f_c \colon X_c \longrightarrow M$ with
  \[ H_n(f_c;\Z)(\alpha_c) = [c] = [M]_\Z \in H_n(M;\Z)
  \]
  and
  \[ d \bigl(\pi_1(X_c)\bigr) 
     \leq n \cdot m.
  \]
  In detail, this is done as follows: We set
  \[ X_c := \bigl( \{1,\dots,m\} \times  \Delta^n\bigr) \bigm/\,\sim, 
  \]
  where the equivalence relation~$\sim$ is generated by the following gluing 
  conditions: For all~$j,j' \in \{1,\dots,m\}$ and all~$k,k' \in \{0,\dots,n\}$ 
  with~$\sigma_j \circ i_{k'} = \sigma_{j'} \circ i_{k'}$ we let
  \[ \forall_{t \in \Delta^{n-1}} \quad 
     \bigl(j, i_k(t)\bigr) \sim \bigl(j', i_{k'}(t)\bigr);
  \]
  here, $i_k \colon \Delta^{n-1} \longrightarrow \Delta^n$ denotes
  the inclusion of the $k$-th face of~$\Delta^n$. 
  We endow $X_c$ with the obvious cellular structure; moreover,
  without loss of generality we may assume that $X_c$ is connected
  (otherwise, we glue as many vertices as needed). 
  Moreover, looking 
  at the description of~$\pi_1(X_c)$ in terms of $1$- and $2$-cells, 
  shows that $d(\pi_1(X_c)) \leq n \cdot m$. 

  By construction, the maps~$\sigma_1, \dots, \sigma_m$ glue to give a
  well-defined continuous map~$f_c \colon X_c \longrightarrow M$.

  As next step, we construct~$\alpha_c$: For~$j \in \{1,\dots,m\}$ we
  consider the singular simplex~$\tau_j\colon \Delta^n \longrightarrow
  X_c$ induced by the $j$-inclusion~$\Delta^n \longrightarrow
  \{1,\dots,m\} \times \Delta^n$. Because $c$ is a cycle in~$M$ the
  gluing relation ensures that~$z_c := \sum_{j=1}^m a_j \cdot \tau_j
  \in C_n(X_c;\Z)$ is a cycle on~$X_c$. We write~$\alpha_c := [z_c]
  \in H_n(X_c;\Z)$ for the corresponding class. By construction, we
  then have~$H_n(f_c;\Z)(\alpha_c) = [c]$, as desired. This concludes 
  the construction of the model space~$X_c$ and its related objects. 

  We show that $\pi_1(f_c) \colon \pi_1(X_c) \longrightarrow
  \Gamma$ is surjective: Let $H := \im \pi_1(f_c) \subset \Gamma$, let
  $\pi \colon \overline M \longrightarrow M$ be the covering of~$M$
  associated with the subgroup~$H$, and let $\overline{f_c} \colon X_c
  \longrightarrow \overline M$ be the corresponding $\pi$-lift
  of~$f_c$. 
  Then 
  \[ H_n(\pi;\Z) \circ H_n(\overline{f_c};\Z) (\alpha_c) 
     = H_n(f_c;\Z) (\alpha_c) 
     = [M]_\Z.
  \]
  In particular, $[M]_\Z \in \im H_n(\pi;\Z)$, and so $|\deg \pi| = 1$ and~$H = \Gamma$. 

  Because $\pi_1(f_c)$ is surjective, we obtain
  \[ d(\Gamma) 
     \leq d\bigl(\pi_1(X_c)\bigr) 
     \leq n \cdot m
     \leq n \cdot |c|_1.
  \]
  Taking the minimum over all fundamental cycles of~$M$ proves the claim. 
\end{proof}

\subsection{A more algebraic proof}\label{subsec:algproof}

We will now prove an improved version of Lemma~\ref{lem}, which
implies Theorem~\ref{mainthm} by passing to finite coverings
and the infimum.

\begin{lem}[rank vs.\ integral simplicial volume, improved bound]\label{lem:improved}
  Let $M$ be an oriented closed connected $n$-manifold with
  fundamental group~$\Gamma$. Then
  \[ d(\Gamma) \leq \isv M. 
  \]
\end{lem}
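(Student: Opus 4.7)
The plan is to sharpen the model-complex construction from Lemma~\ref{lem} so that only $|c|_1$ generators of $\pi_1$ appear, rather than $n\cdot|c|_1$. Starting from a reduced integral fundamental cycle $c=\sum_{j=1}^m a_j\sigma_j$ with $|c|_1=\isv M$ and all vertices mapped to the basepoint, I would split each $a_j\sigma_j$ into $|a_j|$ signed copies in order to assume all coefficients are $\pm 1$ and $m=|c|_1$. Within each simplex, the two-face relations already cut the $\binom{n+1}{2}$ edge-elements down to the $n$ consecutive edges $g_{j,1},\ldots,g_{j,n}$; the extra factor of $n$ in Lemma~\ref{lem} is precisely the cost of treating these as independent across simplices.

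The improvement must come from the cycle condition $\partial c=0$, which pairs up $(n-1)$-faces between simplices and forces identifications among the $g_{j,i}$. My preferred route is algebraic: composing with the classifying map, $c$ becomes a cycle $z=\sum_j \pm[g_{j,1}|\cdots|g_{j,n}]$ in the normalised bar complex of $\Gamma$ representing $[M]\in H_n(\Gamma;\Z)$. Unpacking $\partial z=0$ term by term yields algebraic identities among the $g_{j,i}$, and I would combine these with Nielsen-style moves to eliminate $n-1$ of the $n$ consecutive edges of each simplex, leaving $m$ generators in total. A combinatorial counterpart would be to build an auxiliary graph whose vertices are the simplices and whose edges encode $(n-1)$-face pairings, and then apply a spanning-tree argument to count the remaining independent edge-elements.

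The main obstacle is to show that the reduction really preserves surjectivity onto $\Gamma$. The degree-$1$ argument from the proof of Lemma~\ref{lem} guarantees that the full collection of $nm$ consecutive edges generates $\Gamma$, but tracking this generation property through the elimination step is delicate and requires careful bookkeeping of how each face pairing constrains the corresponding $g_{j,i}$. I expect the cleanest implementation to take place entirely inside the bar complex, where the cancellations forced by $\partial z=0$ are transparent and can directly drive the Nielsen reductions down to $m$ generators.
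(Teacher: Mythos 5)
Your proposal is a strategy sketch rather than a proof, and the gap sits exactly at the step you flag as ``delicate'': you never actually carry out the elimination from $nm$ generators down to $m$, and neither of the two mechanisms you propose is known to deliver it. The spanning-tree version provably falls short: a spanning tree on the $m$ simplices uses $m-1$ face-pairings, each of which identifies at most $n-1$ consecutive-edge elements across the two simplices, so you eliminate at most $(m-1)(n-1)$ generators and are left with at least $n+m-1$, not $m$ (and even this count assumes every tree pairing is of the clean $\partial_0$-versus-$\partial_n$ type; a $\partial_1$-versus-$\partial_1$ cancellation only gives a relation $g_{j,1}g_{j,2}=g_{j',1}g_{j',2}$, which does not eliminate a single generator by itself). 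The bar-complex version has a more fundamental problem: the identities extracted from $\partial z=0$ are exactly the relations visible in the $2$-skeleton of the model complex $X_c$ of Lemma~\ref{lem}, and there is no reason these alone should express the remaining $(n-1)m$ edge elements as words in $m$ chosen ones --- the subgroup of $\pi_1(X_c)$ generated by the first edges may well be proper, so a purely combinatorial Nielsen reduction is trying to prove something stronger than the lemma, possibly something false.

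The paper's proof avoids this entirely and is worth contrasting with your plan. It does choose, in effect, one ``first edge'' per simplex: lifting $c$ to $\ucov M$ so that each $\widetilde\sigma_j$ has its $0$-th vertex in a fundamental domain $D$, it takes $g_j\in\Gamma$ to be the deck transformation returning the first vertex of $\widetilde\sigma_j$ to $D$, and sets $H:=\langle g_1,\dots,g_m\rangle$, so $d(H)\leq m\leq|c|_1$ for free. But the claim $H=\Gamma$ is \emph{not} established by showing that the other edges lie in $H$ via the cycle identities. Instead one pushes $\widetilde c$ down to $\overline M:=\ucov M/H$, checks by a direct computation in $\Z\otimes_{\Z H}C_*(\ucov M;\Z)$ that the image is a cycle (this is precisely where the choice of the $g_j$ and the condition $\partial c=0$ enter), and observes that this cycle maps to $[M]_\Z$ under the covering $\overline M\to M$; hence the covering has degree one and $H=\Gamma$. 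The generation statement is thus extracted from the fundamental class via a degree argument --- i.e., from relations in $\Gamma$ that are invisible to the combinatorics of $c$ --- which is exactly the input your reduction scheme lacks. If you want to salvage your approach, you should replace the Nielsen-reduction step by this descent-and-degree argument.
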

\begin{proof}
  Let $c = \sum_{j=1}^m a_j \cdot \sigma_j \in C_n(M;\Z)$ be an
  integral fundamental cycle of~$M$ in reduced form.  We will now consider
  the corresponding situation on the universal covering: Let $\pi
  \colon \ucov M \longrightarrow M$ be the universal covering of~$M$
  and let $D \subset \ucov M$ be a set-theoretic fundamental domain of
  the deck transformation action of~$\Gamma$ on~$\ucov M$.  We then
  take the lift~$\widetilde c = \sum_{j = 1}^m a_j \cdot
  \widetilde \sigma_j \in C_n(\ucov M;\Z)$ of~$c$ such that for
  every~$j \in \{1,\dots,m\}$ the $0$-th vertex of~$\widetilde
  \sigma_j$ lies in~$D$.

  For each~$j \in \{1,\dots,m\}$ we let $g_j \in \Gamma$ be the group
  element that maps the $0$-th vertex of~$\widetilde \sigma_j \circ
  i_0$ (i.e., the first vertex of~$\widetilde \sigma_j$) to~$D$.  Then
  we set
  \[ S := \{g_1,\dots,g_m\}
     \quad\text{and}\quad
     H := \langle S \rangle_\Gamma \subset \Gamma.
  \]
  By construction~$d(H) \leq |S| = m \leq |c|_1$.

  We will now show
  that $H = \Gamma$ (which completes the proof of the lemma): Let
  $\pi_H \colon \ucov M \longrightarrow \ucov M / H =: \overline M$ be
  the upper covering associated with the subgroup~$H \subset \Gamma$
  and let $\overline c := C_n(\pi_H;\Z)(\widetilde c) \in
  C_n(\overline M;\Z)$. As first step we show that $\overline c$ is a cycle. 
  In
  \[ C_*(\overline M;\Z) \cong \Z \otimes_{\Z H} C_*(\ucov M;\Z)
  \]
  with the trivial $H$-action on~$\Z$, 
  we calculate (using~$g_1, \dots, g_m \in H$) that
  \begin{align*}
    \partial(\overline c)
    &  = \sum_{j=1}^m a_j \cdot g_j^{-1} \otimes g_j \cdot (\widetilde \sigma_j \circ i_0)
    + \sum_{j=1}^m \sum_{k=1}^n (-1)^k \cdot a_j \otimes
      \widetilde \sigma_j \circ i_k;\\
    & = \sum_{j=1}^m a_j \otimes g_j \cdot (\widetilde \sigma_j \circ i_0)
    + \sum_{j=1}^m \sum_{k=1}^n (-1)^k \cdot a_j \otimes
    \widetilde \sigma_j \circ i_k
    \\
    & = 1 \otimes \biggl(\sum_{j=1}^m a_j \cdot g_j \cdot (\widetilde \sigma_j \circ i_0)
    + \sum_{j=1}^m \sum_{k=1}^n (-1)^k \cdot a_j \cdot
    \widetilde \sigma_j \circ i_k\biggr)
    ;
  \end{align*}
  because the submodule of~$C_{n-1}(\ucov M;\Z)$ generated by
  simplices with $0$-th vertex in~$D$ is isomorphic (through
  the covering projection) with~$C_{n-1}(M;\Z)$ and because $\partial c =0$,
  it follows that the right hand term is~$0$. Therefore, $\overline c$
  is a cycle. 

  Let $p_H \colon \overline M \longrightarrow M$ be
  the lower covering associated with~$H \subset \Gamma$. Then
  \[ H_n(p_H;\Z)([\overline c]) = [c] = [M]_\Z,
  \]
  and so $[M]_\Z \in \im H_n(p_H;\Z)$. Therefore, $|\deg p_H|  =1$ and $H = \Gamma$.
\end{proof}

\begin{rem}
  Of course, the statement of Lemma~\ref{lem:improved} also
  generalises to the case of general spaces and homology classes
  in~$H_*(M;\Z)$ that do not lie in the image of the covering map of
  a finite (connected) covering of~$M$.
\end{rem}


\medskip
\vfill

\noindent
\emph{Clara L\"oh}\\[.5em]
  {\small
  \begin{tabular}{@{\qquad}l}
    Fakult\"at f\"ur Mathematik,
    Universit\"at Regensburg,
    93040 Regensburg\\
    \textsf{clara.loeh@mathematik.uni-r.de},\\
    \textsf{http://www.mathematik.uni-r.de/loeh}
  \end{tabular}}


\begin{thebibliography}{100}

  \bibitem{abertjaikinzapirainnikolov}
    M.~Ab\'ert, A.~Jaikin-Zapirain, N.~Nikolov.
    The rank gradient from a combinatorial viewpoint,
    \emph{Groups Geom.\ Dyn.}, 5(2), 213--230, 2011. 
  
  \bibitem{abertnikolov}
    M.~Ab\'ert, N.~Nikolov.
    Rank gradient, cost of groups and the rank versus Heegard genus problem,
    \emph{J.~Eur.\ Math.\ Soc.},~14, 1657--1677, 2012.  

 \bibitem{tomdieck}
    T.~tom~Dieck.
    \emph{Algebraic Topology},
    EMS Textbooks in Mathematics, EMS, 2008.
                
  \bibitem{FFM}
     S.~Francaviglia, R.~Frigerio, B.~Martelli. 
     Stable complexity and simplicial volume of manifolds, 
     \emph{J.~Topol.}, 5~(4), 977--1010, 2012.

  \bibitem{FLPS}
    R.~Frigerio, C.~L\"oh, C.~Pagliantini, R.~Sauer.
    Integral foliated simplicial volume of aspherical manifolds,
    \emph{Israel J.\ Math.}, 216(2), 707--751, 2016. 
     
  \bibitem{gaboriau}
    D.~Gaboriau. Co\^{u}t des relations d'\'{e}quivalence et des groupes, 
    \emph{Invent. Math.}~139 (1), 41--98, 2000. 

  \bibitem{gaboriaul2}
    D.~Gaboriau.
    Invariants $\ell^2$ de relations d'\'equivalence et de groupes,
    \emph{Inst. Hautes \'Etudes Sci. Publ. Math.}, 95, 93--150, 2002. 
         
  \bibitem{vbc}
    M.~Gromov.
    Volume and bounded cohomology,
    \emph{Inst. Hautes \'Etudes Sci. Publ. Math.}, 56, 5--99, 1983.

  \bibitem{gromovmetric}
    M.~Gromov.
    \emph{Metric structures for Riemannian and non-Riemannian spaces},
    with appendices by M.~Katz, P.~Pansu, and S.~Semmes, translated 
    by S.M.~Bates. Progress in Mathematics, 152, 
    Birkh\"auser, 1999.

  \bibitem{karnikolovartin}
    A.~Kar, N.~Nikolov.
    Rank gradient and cost of Artin groups and their relatives,
    \emph{Groups Geom.\ Dyn.}, 8(4), 1195--1205, 2014. 

  \bibitem{kechrismiller}
    A.S.~Kechris, B.D.~Miller.
    \emph{Topics in Orbit Equivalence},
    Springer Lecture Notes in Mathematics, vol.~1852, 2004.
    
  \bibitem{kochloukova}
    D.~Kochloukova.
    Rank and deficiency gradients of generalized Thompson groups of type~F,
    \emph{Bull.\ Lond.\ Math.\ Soc.}, 46(5), 1050--1062, 2014. 
    
  \bibitem{lackenby}
    M.~Lackenby.
    Expanders, rank and graphs of groups,
    \emph{Israel J.\ Math.}, 146, 357--370, 2005. 
    
  \bibitem{loehpagliantini}
    C.~L\"oh, C.~Pagliantini.
    Integral foliated simplicial volume of hyperbolic $3$-manifolds,
    \emph{Groups Geom.\ Dyn.}, 10(3), 825--865, 2016. 
    
  \bibitem{lueckapprox}
   W.~L\"uck. 
   Approximating $L^2$-invariants by their finite-dimensional analogues, 
   \emph{Geom. Funct. Anal.},~4 (4), 455--481, 1994. 

   \bibitem{pappas}
     N.~Pappas.
     Rank gradient and $p$-gradient of amalgamated free products and HNN extensions,
     \emph{Comm.\ Algebra}, 43(10), 4515--4527, 2015. 
   
  \bibitem{mschmidt} 
    M.~Schmidt. 
    \emph{$L^2$-Betti Numbers of
    $\mathcal{R}$-spaces and the Integral Foliated Simplicial
    Volume}. PhD~thesis, Westf\"alische Wilhelms-Universit\"at
    M\"unster, 2005.\\ 
    \textsf{http://nbn-resolving.de/urn:nbn:de:hbz:6-05699458563}

\end{thebibliography}
\end{document}